\newcommand{\Z}{{\mathbb Z}}
\newcommand{\C}{{\mathbb C}}
\newtheorem{thm}{Theorem}[section]
\newtheorem{lemma}[thm]{Lemma}
\newtheorem{cor}[thm]{Corollary}
\newtheorem{prop}[thm]{Proposition}
\newtheorem{defn}[thm]{Definition}
\newtheorem{remark}[thm]{Remark}
\title[Denniston partial difference sets exist in the odd prime case]{Denniston partial difference sets exist in the odd prime case}
\author{James A. Davis}
\address{Department of Math \& Computer Science, 212 Jepson Hall, 221 Richmond Way, University of Richmond, VA 23173, USA}
\email{jdavis@richmond.edu}
\author{Sophie Huczynska}
\address{School of Mathematics and Statistics, Mathematical Institute, University of St Andrews, North Haugh, St Andrews, Fife, KY16 9SS, Scotland, UK}
\email{sh70@st-andrews.ac.uk}
\author{Laura Johnson}
\address{School of Mathematics and Statistics, Mathematical Institute, University of St Andrews, North Haugh, St Andrews, Fife, KY16 9SS, Scotland, UK}
\email{lj68@st-andrews.ac.uk}
\author{John Polhill}
\address{Department of Mathematics, Computer Science and Digital Forensics, Commonwealth University, Bloomsburg, PA, USA}
\email{jpolhill@commonwealthu.edu}
\keywords{Partial difference sets, Denniston parameters, strongly regular graphs, cyclotomy}
\subjclass[2020]{05E30, 05B10, 11T22}
\begin{document}
\maketitle

\begin{abstract}
Denniston constructed partial difference sets (PDSs) with the parameters $(2^{3m}, (2^{m+r} - 2^m + 2^r)(2^m-1), 2^m-2^r+(2^{m+r}-2^m+2^r)(2^r-2), (2^{m+r}-2^m+2^r)(2^r-1))$ in elementary abelian groups of order $2^{3m}$ for all $m \geq 2, 1 \leq r < m$.  These correspond to maximal arcs in Desarguesian projective planes of even order. In this paper, we show that - although maximal arcs do not exist in Desarguesian projective planes of odd order - PDSs with the Denniston parameters $(p^{3m}, (p^{m+r} - p^m + p^r)(p^m-1), p^m-p^r+(p^{m+r}-p^m+p^r)(p^r-2), (p^{m+r}-p^m+p^r)(p^r-1))$ exist in all elementary abelian groups of order $p^{3m}$ for all $m \geq 2,  r \in \{1, m-1\}$ where $p$ is an odd prime, and present a construction.  Our approach uses PDSs formed as unions of cyclotomic classes.
\end{abstract}

\section{Background and motivation}

A $k$-subset $D$ of a finite group $G$ (written additively with identity $0_G$) of order $v$ is a $(v,k,\lambda,\mu)$ {\em partial difference set} (PDS) if the multiset of differences $\Delta := \{ d_1 - d_2 : d_1, d_2 \in D \}$ contains every element of $D-\{0_G\}$ precisely $\lambda$ times and every element of $G-D-\{ 0_G \}$ precisely $\mu$ times. A PDS is equivalent to a strongly regular graph with a regular automorphism group (a sharply transitive automorphism group). A PDS can be thought of as a symmetric association scheme with two classes~\cite{Bos}, and there are connections to projective two-weight codes~\cite{CalKan}. Many infinite families have been constructed, both in abelian and nonabelian groups (see \cite{Ma_1994} for a good survey).  One general family of PDSs is the so-called Negative Latin Square type PDSs, with parameters $(n^2, r(n+1), -n+r+3r^2, r^2+r)$. We note that the order of the group must be a square in order for the PDS to be Negative Latin Square type.

The focus of this paper will be on a family originally due to Denniston~\cite{Den}; the parameters are $(2^{3m}, (2^{m+r} - 2^m + 2^r)(2^m-1), 2^m-2^r+(2^{m+r}-2^m+2^r)(2^r-2), (2^{m+r}-2^m+2^r)(2^r-1))$ for $m \geq 2, 1 \leq r < m$, and Denniston provided a construction in the elementary abelian group $\Z_2^{3m}$ for all of these possibilities. Denniston's examples correspond to maximal arcs in Desarguesian projective planes of even order. Davis and Xiang~\cite{DavXia} used Galois Rings to show that $\Z_4^m \times \Z_2^m$ contains a PDS for the $r \in \{1, m-1\}$ cases for all $m \geq 2$. Brady~\cite{Bra} used a computer search to find examples in the $m=2, r=1$ case for $51$ groups of order $64$, and Smith~\cite{Smi} extended this to an exhaustive search that found examples in $73$ out of $267$ nonisomorphic groups of order $64$. We note that PDSs with Denniston parameters will never be of Negative Latin Square type when $m$ is odd; when $m$ is even there will be one intermediate value of $r$ (namely $r=m/2$) corresponding to a Negative Latin Square type PDS.

We examine the question of whether, when $p$ is an odd prime, PDSs exist with Denniston parameters, i.e. whether there exist PDSs with parameters $(p^{3m}, (p^{m+r} - p^m + p^r)(p^m-1), p^m-p^r+(p^{m+r}-p^m+p^r)(p^r-2), (p^{m+r}-p^m+p^r)(p^r-1))$. We answer this question in the affirmative for the $m \geq 2; r \in \{1, m-1\}$ cases in the group $\Z_p^{3m}$. This is a rather surprising result since (as noted above) Denniston's original examples correspond to maximal arcs in Desarguesian projective planes of even order, but maximal arcs do not exist in Desarguesian projective planes of odd order \cite{BalBloMaz}.  We construct a partial difference set in $GF(p^m) \times GF(p^{2m})$ using cyclotomy; more details on cyclotomy in direct products of finite fields may be found for example in \cite{FerKwaMar}.

The paper is organized as follows. Section~\ref{quadraticform} introduces quadratic forms and uses these to construct a PDS in $GF(p^{2m})$.  Section~\ref{cyclotomy} reframes the main result of Section~\ref{quadraticform} in terms of cyclotomic classes. Section~\ref{charactertheory} defines characters of the group $\Z_p^{3m}$ and demonstrates how they can be used to prove that a subset of a finite abelian group is a PDS. Section~\ref{mainconstruction} contains the construction of the Denniston PDSs in $\Z_p^{3m}$ for all $m \geq 2$.  Finally, Section~\ref{futuredirections} contains questions to be considered in the future.

\section{Quadratic forms}
\label{quadraticform}

For the rest of the paper let $p$ be an odd prime, and consider $GF(p^{2m})$ as a degree-$2$ field extension of $GF(p^m)$, which is in turn a degree-$m$ extension of the prime field $GF(p)$.  Each of $GF(p^{2m})$ and $GF(p^m)$ can be viewed as vector spaces over $GF(p)$ (of dimension $2m$ and $m$ respectively).  

As an intermediate step to our main result, we use quadratic forms to form a PDS  in $GF(p^{2m})$, which can then be viewed as a union of cyclotomic classes with indices specified by a Singer difference set.  We note that a cyclotomic ``lifting construction'' for forming PDSs via subdifference sets of Singer difference sets appears in a strongly-regular graph setting in \cite{MomXia1} and \cite{MomXia} and could be applied in our case to establish this intermediate step.  However, to guarantee that our main construction is clear to the reader, we present a self-contained and direct proof of this intermediate PDS construction.

We have the following definition of a quadratic form (see Ma~\cite{Ma_1994}).

\begin{defn}
Let $V$ be an $n$-dimensional vector space over a field $F$. A function $Q: V \rightarrow F$ is a quadratic form if it satisfies:
\begin{itemize}
\item[(i)] For $ a \in F$, $Q(ax)=a^2 Q(x)$.
\item[(ii)] The function $R: V \times V \rightarrow F, R(u,v):=Q(u+v)-Q(u)-Q(v)$ is bilinear, i.e. linear in each argument.
\end{itemize}
Further, $Q$ is nondegenerate if $R$ is nondegenerate; i.e.
$R(u,v)=0$ for all $v \in V$ implies $u=0$.
\end{defn}

We present the following quadratic form which we will use in this paper. 

\begin{prop}
Let $Q: GF(p^{2m}) \rightarrow GF(p^m)$ be given by $Q(x)=x^{p^m+1}$.  Then $Q(x)$ is a nondegenerate quadratic form.
\end{prop}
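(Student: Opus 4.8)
The plan is to recognize $Q$ as the relative norm form of the quadratic extension $GF(p^{2m})/GF(p^m)$ and to reduce everything to standard properties of the norm and trace. Write $\sigma$ for the Frobenius map $x \mapsto x^{p^m}$; this generates the Galois group $\mathrm{Gal}(GF(p^{2m})/GF(p^m))$, satisfies $\sigma^2 = \mathrm{id}$, and fixes $GF(p^m)$ pointwise, so that $Q(x) = x\,\sigma(x)$ is just $x$ times its conjugate. Before checking the two axioms I would first confirm that $Q$ really lands in $GF(p^m)$ as claimed: applying $\sigma$ and using $\sigma^2 = \mathrm{id}$ gives $\sigma(Q(x)) = \sigma(x)\,x = Q(x)$, so $Q(x)$ is fixed by the Galois group and hence lies in $GF(p^m)$.

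For axiom (i), for $a \in GF(p^m)$ I use $\sigma(a) = a$ to compute $Q(ax) = (ax)\sigma(ax) = a\,\sigma(a)\,x\,\sigma(x) = a^2 Q(x)$. For axiom (ii), expanding $R(u,v) = Q(u+v) - Q(u) - Q(v) = (u+v)\sigma(u+v) - u\sigma(u) - v\sigma(v)$ and cancelling the pure terms yields $R(u,v) = u\,\sigma(v) + v\,\sigma(u) = \mathrm{Tr}(u\,\sigma(v))$, where $\mathrm{Tr}$ denotes the relative trace $GF(p^{2m}) \to GF(p^m)$. This expression is manifestly symmetric and additive in each argument, and homogeneity over $GF(p^m)$ follows because scalars are $\sigma$-fixed: $R(au,v) = au\,\sigma(v) + v\,\sigma(a)\sigma(u) = a\,R(u,v)$. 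Hence $R$ is bilinear.

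The step I expect to be the real content, as opposed to formal manipulation, is nondegeneracy. Fix $u \neq 0$; I must produce some $v$ with $R(u,v) = \mathrm{Tr}(u\,\sigma(v)) \neq 0$. Since multiplication by the nonzero element $u$ and the Frobenius $\sigma$ are both bijections of $GF(p^{2m})$, the element $u\,\sigma(v)$ ranges over all of $GF(p^{2m})$ as $v$ does. Because the relative trace of a separable extension of finite fields is surjective onto the base field, and in particular not identically zero, there is a choice of $v$ giving $R(u,v) \neq 0$. This shows that $R(u,v) = 0$ for all $v$ forces $u = 0$, so $R$ is nondegenerate and $Q$ is a nondegenerate quadratic form. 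The only places where the arithmetic of the extension genuinely enters are this nondegeneracy argument and the verification that $Q$ takes values in $GF(p^m)$; once $Q$ is identified as the norm, the remaining checks are routine.
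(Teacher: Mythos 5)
Your proof is correct. The verification of the two quadratic-form axioms is essentially the same computation as the paper's, just phrased via the Galois conjugation $\sigma(x)=x^{p^m}$ rather than raw exponent manipulation: both arrive at $R(u,v)=u^{p^m}v+uv^{p^m}$ and check bilinearity using $\sigma(a)=a$ for $a\in GF(p^m)$. Your identification of $Q$ as the relative norm and of $R(u,v)$ as the relative trace $\mathrm{Tr}_{GF(p^{2m})/GF(p^m)}(u\,\sigma(v))$ is a conceptual gloss the paper does not make, and your observation that $Q$ indeed takes values in $GF(p^m)$ is a point the paper leaves implicit. The genuine divergence is in the nondegeneracy step: the paper simply sets $v=u$ to get $2u^{p^m+1}=0$ and divides by $2$, which is shorter but uses $p$ odd; you instead argue that $u\,\sigma(v)$ sweeps all of $GF(p^{2m})$ and invoke surjectivity (in particular nontriviality) of the relative trace. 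Your argument is slightly longer and leans on a standard fact about traces of separable extensions, but in exchange it is characteristic-free, whereas the paper's one-liner would fail at $p=2$; within the paper's standing hypothesis that $p$ is odd, both are equally valid.
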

\begin{proof}
We show that the above requirements are satisfied for $V=GF(p^{2m})$, $F = GF(p^m)$ and $Q(x)=x^{p^m+1}$.
First, let $a \in GF(p^m)$; then $a^{p^m}=a$.  So 
$$Q(ax)=(ax)^{p^m+1}=a^{p^m+1}x^{p^m+1}=a^2 x^{p^m+1} = a^2 Q(x).$$
We define the Frobenius automorphism $\phi_p: GF(p^{2m}) \rightarrow GF(p^{2m})$ by $\phi_p(x)=x^p$, and the fact that this is an automorphism~\cite{LidNie} implies that $(a+b)^{p^t} = a^{p^t} + b^{p^t}$ for $a,b \in GF(p^{2m}), t \geq 1$. This leads to the following computation.

\begin{eqnarray*}
R(u,v)=Q(u+v)-Q(u) - Q(v) & = & (u+v)^{p^m+1} - u^{p^m+1} - v^{p^m+1} \\
& = & (u+v)^{p^m}(u+v) - u^{p^m+1} - v^{p^m+1} \\
& = & (u^{p^m} + v^{p^m})(u+v) - u^{p^m+1} - v^{p^m+1} \\
& = & u^{p^m}v + uv^{p^m} \\
\end{eqnarray*}

Clearly the expression for $R$ is symmetric in $u$ and $v$.  We have

\begin{eqnarray*}
R(u_1+u_2,v) & = & (u_1+u_2)^{p^m}v+ (u_1+u_2)v^{p^m} \\
& = & (u_1^{p^m}+u_2^{p^m})v+(u_1+u_2)v^{p^m} \\
& = & u_1^{p^m}v+u_1 v^{p^m} + u_2^{p^m}v+u_2 v^{p^m} \\
& = & R(u_1,v)+R(u_2,v) \\
\end{eqnarray*}

For $a \in GF(p^m)$, we have 

\begin{eqnarray*}
R(au,v) & = & (au)^{p^m}v+(au)v^{p^m} \\
& = & a^{p^m}u^{p^m}v+auv^{p^m} \\
& = & au^{p^m}v+auv^{p^m} \\
& = & a R(u,v).\\
\end{eqnarray*}

So $Q$ satisfies both properties and hence is a quadratic form.
Finally, to prove it is nondegenerate: suppose $R(u,v)=0$ for all $v \in GF(p^{2m})$.  Then, for all $v$,
$u^{p^m}v+u v^{p^m}=0$.  Taking $v=u$ we see that $2u^{p^m+1}=0$, ie $u^{p^m+1}=0$ and hence $u=0$.
\end{proof}

Next, let $Tr_{m/1}: GF(q^m) \rightarrow GF(q)$ be the trace map, given by $Tr_{m/1}(x)=x+x^q+x^{q^2} + \cdots + x^{q^{m-1}}$.  We have the following result of Brouwer \cite{Bro}:

\begin{lemma}\label{BrouwerLemma}[Brouwer]
Let $F=GF(q^m)$ and $F_0=GF(q)$ ($q$ a prime power and $m>1$).  Let $V$ be a vector space of dimension $d$ over $F$ and write $V_0$ for the same vector space over $F_0$ (we will take $V=GF(q^{dm})$).
\begin{itemize}
\item[(i)] If $Q:V \rightarrow F$ is a quadratic form on $V$, then $Q_0=Tr \circ Q$ is a quadratic form on $V_0$.
\item[(ii)] $Q_0$ is nondegenerate if and only if $Q$ is nondegenerate and either $q$ is odd or $d$ is even.
\end{itemize}
\end{lemma}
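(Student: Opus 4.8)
The plan is to verify the two quadratic-form axioms for $Q_0 = Tr \circ Q$ directly, and then to reduce the nondegeneracy statement to the behaviour of the associated bilinear form under $Tr$ together with the behaviour of $Q$ on its radical. For (i), since $a \in F_0 = GF(q)$ gives $a^2 \in F_0$, property (i) of $Q$ and the $F_0$-linearity of $Tr$ yield $Q_0(ax) = Tr(a^2 Q(x)) = a^2 Q_0(x)$. The main observation is that the bilinear form attached to $Q_0$ is exactly $R_0 = Tr \circ R$, where $R(u,v) = Q(u+v) - Q(u) - Q(v)$; this is immediate from additivity of $Tr$, and since $R$ is $F$-bilinear while $Tr$ is $F_0$-linear, $R_0$ is $F_0$-bilinear. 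That finishes (i).

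For (ii) the key step is the identity $\mathrm{rad}(R_0) = \mathrm{rad}(R)$ as subsets of $V$. One inclusion is clear. For the other, if $u \notin \mathrm{rad}(R)$ then $v \mapsto R(u,v)$ is a nonzero $F$-linear functional into the one-dimensional space $F$, hence surjective; as $Tr$ is itself surjective (so not identically zero), some $v$ gives $Tr(R(u,v)) \neq 0$, so $u \notin \mathrm{rad}(R_0)$. When $q$ is odd this settles the lemma at once: there $u \in \mathrm{rad}(R)$ forces $R(u,u) = 2Q(u) = 0$, hence $Q(u) = 0$, so the quadratic radical $\{u \in \mathrm{rad}(R) : Q(u)=0\}$ coincides with $\mathrm{rad}(R)$, and likewise for $Q_0$; thus $Q_0$ is nondegenerate iff $Q$ is, with the side condition automatically met.

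The hard part is the case $q$ even, where the parity condition genuinely arises and one must use the vanishing of the quadratic radical $\{u \in \mathrm{rad}(R) : Q(u)=0\}$ (which agrees with $\mathrm{rad}(R)=0$ in odd characteristic, matching the definition above), and similarly for $Q_0$ with $Tr(Q(u))=0$. On $\mathrm{rad}(R)$ the form $Q$ is additive and satisfies $Q(au)=a^2 Q(u)$, and in characteristic $2$ the squaring map is a bijection of $F$. I would combine this with the fact that $R$, being alternating, has even rank, so $\dim_F \mathrm{rad}(R) \equiv d \pmod 2$. When $d$ is even, a nondegenerate $Q$ forces $\mathrm{rad}(R)=0$, whence $\mathrm{rad}(R_0)=0$ and $Q_0$ is nondegenerate; when $d$ is odd, $\mathrm{rad}(R)$ is nonzero, and bijectivity of Frobenius together with $\ker Tr \neq 0$ (as $m>1$) produces a nonzero radical vector on which $Tr \circ Q$ vanishes, forcing $Q_0$ to be degenerate. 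Tracking both directions through these cases yields the stated equivalence.

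I expect the only real obstacle to be the characteristic-$2$ bookkeeping: one must keep $\mathrm{rad}(R)$ and the strictly smaller quadratic radical carefully separated, since it is precisely their discrepancy---invisible to the bilinear form $R_0$ but detected by $Tr \circ Q$ on a one-dimensional radical---that produces the parity condition. In the odd-characteristic setting relevant to this paper the two radicals coincide, so this subtlety does not intervene.
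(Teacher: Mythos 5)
The paper offers no proof of this lemma to compare against: it is quoted directly from Brouwer's paper \cite{Bro} as a known result. Assessed on its own merits, your argument is essentially correct and contains all the ideas a complete proof needs. Part (i) is routine as you say. For (ii), the identity $\mathrm{rad}(R_0)=\mathrm{rad}(R)$ (a nonzero $F$-linear functional onto the one-dimensional space $F$ is surjective, and $Tr$ is not identically zero) is the right reduction, and it immediately settles the odd-characteristic case. Your characteristic-$2$ analysis also correctly locates where the parity hypothesis enters: $R$ is alternating there, so $\dim_F\mathrm{rad}(R)\equiv d\pmod 2$, while nondegeneracy of $Q$ forces $\dim_F\mathrm{rad}(R)\le 1$ because $Q$ restricted to $\mathrm{rad}(R)$ is an injective, additive, Frobenius-semilinear map into $F$; for $d$ odd this leaves a one-dimensional radical on which $Tr\circ Q$ has a nontrivial zero since $|\ker Tr|=q^{m-1}>1$. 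Two small points. First, the paper's Definition 2.1 defines nondegeneracy of a quadratic form purely via the bilinear form $R$; under that convention the parity clause in (ii) is vacuous (an alternating form on an odd-dimensional space is automatically degenerate, so the right-hand side of the equivalence already fails when $q$ is even and $d$ is odd). Your proof tacitly adopts the standard characteristic-$2$ convention --- vanishing of the quadratic radical $\{u\in\mathrm{rad}(R):Q(u)=0\}$ --- which is what makes the parity condition genuinely bite; you flag this, and since the paper only applies the lemma with $q=p$ odd the discrepancy is harmless, but a polished write-up should say explicitly which definition is in force. Second, the closing ``tracking both directions'' sentence hides the (easy but necessary) implication that $Q$ degenerate forces $Q_0$ degenerate: a nonzero $u$ with $R(u,\cdot)\equiv 0$ and $Q(u)=0$ also satisfies $R_0(u,\cdot)\equiv 0$ and $Q_0(u)=Tr(0)=0$. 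With those lines written out, the proof is complete.
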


\noindent Here we will take $F=GF(p^m)$, $F_0=GF(p)$ and $V=GF(p^{2m})$, so $d=2$.  
Let 
$$Q_0=Tr_{m/1}(Q(x))=Tr_{m/1}(x^{p^m+1})=x^{p^m+1}+x^{p^{m+1}+p}+x^{p^{m+2}+p^2}+ \cdots + x^{p^{2m-1} + p^{m-1}}.$$
Part (i) of Lemma~\ref{BrouwerLemma} implies that $Q_0$ is a quadratic form on $V_0=GF(p)$, and part (ii) tells us that $Q_0$ is nondegenerate (as $Q$ is nondegenerate and $q$ is odd).

Next, we state the following useful result (Theorem 2.6 of \cite{Ma_1994}):

\begin{thm}\label{MaTheorem}
Let $Q_0: \mathbb{F}_q^{2m} \rightarrow \mathbb{F}_q$ be a nondegenerate quadratic form.  Then
$$X=\{x \in \mathbb{F}_q^{2m} \setminus \{0\}: Q_0(x)=0\}$$ 
is a regular $(q^{2m}, q^{2m-1}+ \epsilon q^{m-1}(q-1)-1, q^{2m-2}+\epsilon q^{m-1}(q-1)-2, q^{2m-2}+\epsilon q^{m-1})$-PDS in the additive group of $\mathbb{F}_q^{2m}$, where $\epsilon= \pm 1$ and depends on the choice of $Q_0$.
\end{thm}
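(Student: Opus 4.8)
The plan is to use the standard character-theoretic criterion for partial difference sets: if $D \subseteq G$ is a subset of a finite abelian group with $0 \notin D$ and $-D = D$, then $D$ is a $(v,k,\lambda,\mu)$-PDS precisely when every nontrivial character $\chi$ of $G$ satisfies $\chi(D) = \sum_{d \in D}\chi(d) \in \{\beta_1,\beta_2\}$, where $\beta_1,\beta_2$ are the two roots of $T^2 - (\lambda-\mu)T - (k-\mu) = 0$. This is exactly the machinery developed in Section~\ref{charactertheory}. Since $Q_0(-x) = (-1)^2 Q_0(x) = Q_0(x)$, the set $X$ is symmetric, and $0 \notin X$ by construction, so the whole proof reduces to computing $\chi(X)$ for each nontrivial additive character $\chi$ of $\mathbb{F}_q^{2m}$ and checking that only two values occur.

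First I would fix a nontrivial additive character $\psi$ of $\mathbb{F}_q$ and write each character of the additive group $\mathbb{F}_q^{2m}$ as $\chi_a(x) = \psi(\langle a,x\rangle)$ for the standard bilinear pairing, and express the indicator of the quadric as an average $\mathbf{1}[Q_0(x)=0] = \frac1q\sum_{t\in\mathbb{F}_q}\psi(tQ_0(x))$. Substituting and separating the $x=0$ term gives $\chi_a(X) = -1 + \frac1q\sum_{t\in\mathbb{F}_q}\sum_{x}\psi(tQ_0(x)+\langle a,x\rangle)$, where the $t=0$ contribution vanishes because $a\neq0$. The problem is thereby reduced to evaluating the twisted Weil sums $W(a,t) = \sum_x \psi(tQ_0(x)+\langle a,x\rangle)$ for $t\neq0$.

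The key computation is the evaluation of $W(a,t)$. I would first handle the untwisted sum: diagonalizing the nondegenerate form $Q_0$ and using the standard quadratic Gauss sum $G=\sum_{u}\psi(u^2)$ (with $G^2 = \eta(-1)q$, $\eta$ the quadratic character) yields $\sum_x\psi(tQ_0(x)) = \epsilon\, q^m$ for a sign $\epsilon=\pm1$ determined by the type (hyperbolic or elliptic) of $Q_0$; crucially, because the dimension $2m$ is even, the factor $\eta(t)^{2m}=1$, so this value is independent of $t$. For the linear twist I would complete the square: since $Q_0$ is nondegenerate its associated bilinear form $R$ is invertible, so there is a unique shift removing the linear term, producing $W(a,t)=\epsilon q^m\,\psi(-Q_0^{*}(a)/t)$ for a nondegenerate ``dual'' form $Q_0^{*}$. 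Summing over $t\in\mathbb{F}_q^{*}$ (reindexing $s=-1/t$) then gives $\sum_{s\neq0}\psi(sQ_0^{*}(a))$, which equals $q-1$ when $Q_0^{*}(a)=0$ and $-1$ otherwise. This produces exactly the two values $\beta_1 = -1+\epsilon q^{m-1}(q-1)$ and $\beta_2 = -1-\epsilon q^{m-1}$.

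Finally I would verify that these two character values are consistent with the claimed parameters: taking the trivial character (equivalently, the $t=0$ term) recovers $k=q^{2m-1}+\epsilon q^{m-1}(q-1)-1$, and a direct check confirms $\beta_1+\beta_2 = \lambda-\mu$ and $\beta_1\beta_2 = -(k-\mu)$ for the stated $\lambda,\mu$, so $\beta_1,\beta_2$ are indeed the required roots. The main obstacle is the Weil-sum evaluation in the third step---carrying out the Gauss-sum bookkeeping and, above all, correctly tracking the sign $\epsilon$ as an invariant of the type of the form---together with the completing-the-square step that identifies the dual form $Q_0^{*}$; once $W(a,t)$ is pinned down, the two-valuedness and the parameter check are routine.
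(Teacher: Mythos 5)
The paper itself offers no proof of this statement: it is imported verbatim as Theorem~2.6 of Ma's survey \cite{Ma_1994}, so there is nothing internal to compare against. Your proposal is a correct, self-contained route to the result, and it is the standard one for quadrics: reduce to the character criterion (the paper's Theorem~\ref{characterTHPDSs}), expand the indicator of $\{Q_0=0\}$ as $\frac1q\sum_t\psi(tQ_0(x))$, and evaluate the resulting Gauss/Weil sums by diagonalising and completing the square. The key points all check out: $X=-X$ since $Q_0(-x)=Q_0(x)$; the untwisted sum equals $\eta((-1)^m\det Q_0)\,q^m=\epsilon q^m$ independently of $t$ because $\eta(t)^{2m}=1$ in even dimension; completing the square (legitimate since $q$ is odd and the polarisation $R$ is invertible) gives $W(a,t)=\epsilon q^m\psi(-Q_0^*(a)/t)$ with $Q_0^*(a)=Q_0(R^{-1}a)$; and summing over $t\neq0$ yields exactly the two values $-1+\epsilon q^{m-1}(q-1)$ and $-1-\epsilon q^{m-1}$, whose sum and product match $\lambda-\mu$ and $-(k-\mu)$ for the stated parameters. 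Two small points you should make explicit when writing this up: the character criterion as stated requires the counting identity $k^2=k+\lambda k+\mu(v-k-1)$ as a hypothesis (equivalently, that the principal character of $D^2-k\cdot 1-\lambda D-\mu(G-D-1)$ also vanishes), so this routine arithmetic verification must be recorded, not just the two root conditions; and the sign $\epsilon$ should be identified as the invariant $\eta((-1)^m\det Q_0)$ of the equivalence class of the form, which is what makes the phrase ``depends on the choice of $Q_0$'' precise. With those noted, your argument is complete and arguably more informative than the paper's bare citation, since it exhibits $\epsilon$ explicitly.
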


Applying Theorem \ref{MaTheorem} to our quadratic form $Q_0$ yields the following:
\begin{thm}\label{X_PDS}
Let $Q_0: \mathbb{F}_p^{2m} \rightarrow \mathbb{F}_p$, $Q_0(x)=Tr_{m/1}(x^{p^m+1})$.  Then
$$X=\{x \in GF(p^{2m}) \setminus \{0\}: Tr_{m/1}(x^{p^m+1})=0\}.$$ 
is a regular $(p^{2m}, p^{2m-1}-p^{m-1}(p-1)-1, p^{2m-2}-p^{m-1}(p-1)-2, p^{2m-2}-p^{m-1})$-PDS in the additive group of $\mathbb{F}_p^{2m}$.
\end{thm}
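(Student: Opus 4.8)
The plan is to read this off directly from Theorem~\ref{MaTheorem}. By the preceding discussion, $Q_0(x)=Tr_{m/1}(x^{p^m+1})$ is a nondegenerate quadratic form on $V_0=\mathbb{F}_p^{2m}$, the nondegeneracy coming from Lemma~\ref{BrouwerLemma}(ii) since $Q$ is nondegenerate and $p$ is odd. Hence Theorem~\ref{MaTheorem} with $q=p$ already guarantees that $X$ is a regular PDS whose parameters have exactly the stated shape, with the single remaining ambiguity being the sign $\epsilon\in\{+1,-1\}$. Matching the claimed parameters against those in Theorem~\ref{MaTheorem} shows that the entire content of the result reduces to verifying that $\epsilon=-1$ for this particular form.

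To determine $\epsilon$ it suffices to count $|X|$, since the first nontrivial parameter equals $p^{2m-1}+\epsilon p^{m-1}(p-1)-1$. The key observation is that $Q(x)=x^{p^m+1}=x\cdot x^{p^m}$ is precisely the relative norm map $N\colon GF(p^{2m})\to GF(p^m)$ of the degree-$2$ extension, so that $Q_0=Tr_{m/1}\circ N$. I would therefore count the non-zero $x$ satisfying $Tr_{m/1}(N(x))=0$ by composing two standard fiber counts.

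First, the norm map restricts to a surjective group homomorphism $GF(p^{2m})^{\ast}\to GF(p^m)^{\ast}$ with kernel of order $(p^{2m}-1)/(p^m-1)=p^m+1$, so every non-zero value in $GF(p^m)$ has exactly $p^m+1$ preimages, while $N^{-1}(0)=\{0\}$. Second, the trace $Tr_{m/1}$ is a surjective $\mathbb{F}_p$-linear map whose kernel is a hyperplane of size $p^{m-1}$ and hence contains $p^{m-1}-1$ non-zero elements. Since a non-zero $x$ cannot have $N(x)=0$, we need $N(x)$ to lie in the non-zero part of $\ker(Tr_{m/1})$, which gives $|X|=(p^{m-1}-1)(p^m+1)$. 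A short expansion shows this equals $p^{2m-1}-(p-1)p^{m-1}-1$, exactly the value that forces $\epsilon=-1$, and the remaining parameters then follow from Theorem~\ref{MaTheorem}.

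I do not expect a serious obstacle here: the statement is essentially a specialization of the general PDS theorem, and the only genuine content is pinning down the sign. The point requiring care is the identification $x^{p^m+1}=N(x)$ and the subsequent fiber count, in particular computing the kernel orders of the norm and trace maps correctly, rather than attempting to evaluate the Witt type or discriminant of $Q_0$ abstractly. An alternative route would extract $\epsilon$ from a Gauss-sum evaluation of the character sum $\sum_{x}\psi(Q_0(x))$, but the direct fiber count above is cleaner and keeps the argument self-contained, in keeping with the paper's stated aim.
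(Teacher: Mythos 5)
Your proposal is correct and follows the same route as the paper: both simply specialize Theorem~\ref{MaTheorem} to the form $Q_0=Tr_{m/1}\circ Q$, whose nondegeneracy is already supplied by Lemma~\ref{BrouwerLemma}. The one place you go beyond the paper is the determination of the sign: the paper's proof merely asserts that $\epsilon=-1$, whereas you actually establish it by the fiber count $|X|=(p^{m-1}-1)(p^m+1)=p^{2m-1}-p^{m-1}(p-1)-1$, using that the norm $N(x)=x^{p^m+1}$ is surjective onto $GF(p^m)^*$ with kernel of order $p^m+1$ and that $\ker(Tr_{m/1})$ is a hyperplane; that computation is correct and supplies the justification the paper leaves implicit.
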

\begin{proof}
We apply Theorem \ref{MaTheorem}.  Using our choice of quadratic form $Q_0: GF(p^{2m}) \rightarrow GF(p)$, $Q_0(x)=Tr_{m/1}(x^{p^m+1})$, our set $X$ is defined by:
$$X=\{x \in GF(p^{2m})^*: Q_0(x)=0\}=\{x \in GF(p^{2m})^*: Tr_{m/1}(x^{p^m+1})=0\}.$$ 
In our setting, $\epsilon$ as defined in Theorem \ref{MaTheorem} takes the value $-1$.  
\end{proof}

We observe that $X$ is of Negative Latin Square type (with $n=p^m$ and $r=p^{m-1}-1$).

\begin{remark}
As noted at the start of this section, this PDS result could also be obtained as a special case of the ``lifting construction'' of \cite{MomXia1, MomXia}.  This would be achieved by an application of Theorem 3.1 of \cite{MomXia1}; this result is applicable since (in the notation of \cite{MomXia1}) we have a cyclotomic strongly regular graph $\mathrm{Cay}(\mathbb{F}_{p^m}, C_0)$ corresponding to the subfield case; specifically $C_0 \cong \mathbb{F}_p^*$.
\end{remark}

\section{Cyclotomy}
\label{cyclotomy}

In this section, we introduce the notion of cyclotomy, and show how Theorem \ref{X_PDS} may be expressed in terms of cyclotomic classes.  This is useful as it will provide a natural way to express our main construction for the PDS with Denniston parameters.  

\begin{defn}\label{cycldef}
Let $GF(q)$ be a finite field of order $q =p^n= ef + 1$ (where $p$ is prime and $e>1$). Let $\alpha$ be a primitive element of GF$(q)$. The \emph{cyclotomic classes} of order $e$  are defined: 
\begin{center}
$C_i^{e,n} = \alpha^i\langle\alpha^e\rangle$   
\end{center}
for $0 \leq i \leq e-1$. Each cyclotomic class is of size $f$.
\end{defn}

The study of cyclotomic classes originates with Gauss, and they have been used to construct difference sets and related objects since the work of Paley \cite{Pal}. There is particular interest in the question of when individual cyclotomic classes, or unions of such classes, form difference sets and PDSs; for a comprehensive recent survey, see \cite{MomWanXia}.  

In our setting, we take the field $GF(p^{2m})$ (where $p$ is an odd prime), and let $\alpha$ be a primitive element of this field.  Our classes have the form $C_i^{e,2m}$ where $e|p^{2m}-1$.  Note that $C_i^{e,2m} = \alpha^i C_0^{e,2m}$.

We now reframe the results of the previous section and show that the preimages under $Q$ of the powers of the generator of $GF(p^m)$ correspond to certain cyclotomic classes.  This yields a cyclotomic intepretation of the PDS $X$ from Theorem \ref{X_PDS}.

\begin{thm}\label{cycPDS}
Let $GF(p^{2m})^*=\langle \alpha \rangle$ and $GF(p^m)^*=\langle \omega \rangle$, where $\omega=\alpha^{p^m+1}$.  As in Section \ref{quadraticform}, let $Q: GF(p^{2m}) \rightarrow GF(p^m)$ be given by $Q(x)=x^{p^m+1}$, let $Q_0=Tr_{m/1}(Q(x))$ and let $X=\{x \in \mathbb{F}_q^{2m} \setminus \{0\}: Q_0(x)=0\}$.
\begin{itemize}
\item[(i)] The cyclotomic class $C_{i}^{p^m-1,2m}$ is the preimage under $Q$ of $\omega^i$ in $GF(p^m)^*$, where $0 \leq i \leq p^m-2$.
\item[(ii)] Let $I=\{i: Tr_{m,1}(\omega^i)=0, \quad 0 \leq i \leq p^m-2\}$.  Then 
$$X=\bigcup_{i \in I} C_{i}^{p^m-1,2m}.$$
\end{itemize}
\end{thm}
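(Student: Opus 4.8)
The plan is to recognize the map $Q(x) = x^{p^m+1}$ as the norm map from $GF(p^{2m})$ to $GF(p^m)$. On the multiplicative group, the norm is $N(x) = x \cdot x^{p^m} = x^{p^m+1} = Q(x)$, and this is a surjective group homomorphism $GF(p^{2m})^* \to GF(p^m)^*$ whose kernel has order $(p^{2m}-1)/(p^m-1) = p^m+1$. The single computation underlying everything is that, since $\omega = \alpha^{p^m+1}$, we have $Q(\alpha^j) = \alpha^{j(p^m+1)} = (\alpha^{p^m+1})^j = \omega^j$ for every integer $j$.

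For part (i), I would combine this with the fact that $\omega$ has order $p^m-1$. Then $Q(\alpha^j) = \omega^i$ holds precisely when $\omega^j = \omega^i$, i.e. when $j \equiv i \pmod{p^m-1}$. Hence the preimage $Q^{-1}(\omega^i)$ consists of exactly those powers $\alpha^j$ with $j \equiv i \pmod{p^m-1}$, which is the coset $\alpha^i \langle \alpha^{p^m-1} \rangle = C_i^{p^m-1,2m}$. As a consistency check, each such class has size $f = (p^{2m}-1)/(p^m-1) = p^m+1$, matching the common fiber size of the norm map; and as $i$ ranges over $0 \leq i \leq p^m-2$ these classes partition $GF(p^{2m})^*$, mirroring the partition of $GF(p^{2m})^*$ into norm fibers over $GF(p^m)^*$.

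Part (ii) then follows by sorting the nonzero field elements according to their norm. Since $X$ excludes $0$ and, for $x \neq 0$, $Q(x) = x^{p^m+1} \neq 0$, every $x \in GF(p^{2m})^*$ has $Q(x) \in GF(p^m)^*$, so $Q(x) = \omega^i$ for a unique $i$ with $0 \leq i \leq p^m-2$. Because $Q_0 = Tr_{m/1} \circ Q$, the defining condition $Q_0(x) = 0$ becomes $Tr_{m/1}(\omega^i) = 0$, i.e. $i \in I$. Thus $X = \bigcup_{i \in I} Q^{-1}(\omega^i)$, and substituting the description of each fiber from part (i) gives $X = \bigcup_{i \in I} C_i^{p^m-1,2m}$.

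I do not expect a serious obstacle here; the proof is essentially the observation that $Q$ is the norm map. The only points requiring care are the index bookkeeping modulo $p^m-1$ in part (i) and, in part (ii), the explicit handling of the excluded zero element, ensuring that removing $0$ from $GF(p^{2m})$ exactly matches working inside the multiplicative group on which the cyclotomic classes live.
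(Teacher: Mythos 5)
Your proposal is correct and follows essentially the same route as the paper: the key step in both is the computation $Q(\alpha^j)=(\alpha^{p^m+1})^j=\omega^j$, from which the identification of $C_i^{p^m-1,2m}$ with the fiber over $\omega^i$ and then part (ii) follow immediately. Your framing of $Q$ as the norm map and your explicit argument that each class is the \emph{entire} preimage (via $j\equiv i \pmod{p^m-1}$, rather than just checking containment and relying on the partition into $p^m-1$ classes) are minor refinements of, not departures from, the paper's argument.
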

\begin{proof}
For (i), note $C_{i}^{p^m-1,2m}=\{ \alpha^{(p^m-1)t+i}: 0 \leq t \leq (p-1)(p^m+1)-1 \}$ ($0 \leq i \leq p^m-2$) and $1=\alpha^{p^{2m}-1}=\alpha^{(p^m-1)(p^m+1)}$. Then 
$$Q(\alpha^{(p^m-1)t+i})=(\alpha^{(p^m-1)t+i})^{p^m+1}=\alpha^{(p^m-1)(p^m+1)t} \cdot \alpha^{(p^m+1)i}=1 \cdot (\alpha^{p^m+1})^i=\omega^i.$$
Part (ii) is immediate from the definition of $X$ combined with part (i): the set $X$ consists of precisely those $C_{i}^{p^m-1,2m}$ such that $\omega^i$ are the trace-$0$ elements of $GF(p^m)$.
\end{proof}

Observe that $|I|=p^{m-1}-1$: the trace map $Tr_{m/1}$ is a linear map from $GF(p^m)$ with a kernel of size $p^{m-1}$ (the image has size $p$ and the map is onto), and the zero element is excluded from $I$.

Next, we will demonstrate a connection to Singer difference sets, and obtain a simpler expression for $X$.  Consider $GF(p^m)$ as a dimension-$m$ vector space over $GF(p)$ with basis $\{1,\omega,\omega^2 \ldots \omega^{m-1} \}$.  We will later be working in projective space, i.e. we will take the non-zero $m$-tuples and identify two $m$-tuples if they differ by a scalar factor.

For $GF(p^m)$, the zero-trace condition defines a hyperplane in the vector space.  A well-known result of Singer (Construction 18.28, \cite{ColDin}) tells us that, modulo the scalars, its elements form a multiplicative $(\frac{p^m-1}{p-1}, \frac{p^{m-1}-1}{p-1}, \frac{p^{m-2}-1}{p-1})$-difference set in the cyclic group of order $\frac{p^m-1}{p-1}$ induced by $\omega$ on the equivalence classes.  Let $e=\frac{p^m-1}{p-1}$.     Taking the set of powers $i$ such that $\omega^i$ is in this difference set, will yield a corresponding difference set in the additive group $(\mathbb{Z}_{e},+)$.   Observe that $GF(p) \setminus \{0\}$ is in fact the cyclotomic class of order $e$ in $GF(p^m)$, so we have factored by $\langle w^e \rangle=C_0^{e,m}$.

Let $T$ be the set of nonzero trace-$0$ elements of $GF(p^m)$.  
Since $|T|=p^{m-1}-1$, there are $\frac{p^{m-1}-1}{p-1}$ elements $S=\{ \omega^i: Tr_{m/1}(\omega^i)=0, 0 \leq i \leq e-1 \}$.  Taking the corresponding set of powers yields the set $D=\{i: \omega^i \in S,  0 \leq i \leq e-1 \}$,
an $(e, \frac{p^{m-1}-1}{p-1}, \frac{p^{m-2}-1}{p-1})$ Singer difference set in $(\mathbb{Z}_e,+)$. We record the following, which will be useful in what follows:

\begin{lemma}\label{traceT} The set of nonzero elements $T$ of $GF(p^m)$ with trace $0$ is
$$T=\cup_{j \in D} \{ \omega^j, \omega^{e+j}, \ldots, \omega^{(p-2)e+j}\}= \cup_{j \in D} w^j \langle w^e \rangle.$$
The set $I$ of indices of $T$ is
$$I=\cup_{j \in D} \{j,e+j,\ldots,(p-2)e+ j\}.$$
\end{lemma}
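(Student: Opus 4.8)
The plan is to exploit the fact that the zero-trace condition is invariant under multiplication by the prime subfield, which forces $T$ to be a union of cyclotomic classes of order $e$ in $GF(p^m)$. First I would recall that the trace map $Tr_{m/1}\colon GF(p^m) \to GF(p)$ is $GF(p)$-linear, so that $Tr_{m/1}(cx) = c\,Tr_{m/1}(x)$ for every $c \in GF(p)$ and $x \in GF(p^m)$. In particular, if $Tr_{m/1}(x) = 0$ then $Tr_{m/1}(cx) = 0$ for all $c \in GF(p)^*$. As already observed in this section, $GF(p)^* = \langle \omega^e \rangle = C_0^{e,m}$, since $\omega^e$ has multiplicative order $(p^m-1)/e = p-1$. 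Hence $T$ is stable under multiplication by $\langle \omega^e \rangle$; that is, $T$ is a union of cosets of $\langle \omega^e \rangle$ in $GF(p^m)^* = \langle \omega \rangle$.

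Next I would identify exactly which cosets occur. The cosets of $\langle \omega^e \rangle$ in $\langle \omega \rangle$ are precisely the cyclotomic classes $\omega^j \langle \omega^e \rangle$ for $0 \le j \le e-1$, and by the invariance just established each such coset is either entirely contained in $T$ or entirely disjoint from it; which alternative holds is detected by the single representative $\omega^j$. Thus $\omega^j \langle \omega^e \rangle \subseteq T$ if and only if $Tr_{m/1}(\omega^j) = 0$, i.e. if and only if $\omega^j \in S$, i.e. if and only if $j \in D$. This yields
$$T = \bigcup_{j \in D} \omega^j \langle \omega^e \rangle = \bigcup_{j \in D} \{\omega^j, \omega^{e+j}, \ldots, \omega^{(p-2)e+j}\},$$
where the second equality uses $\langle \omega^e \rangle = \{1, \omega^e, \ldots, \omega^{(p-2)e}\}$, a cyclic group of order $p-1$.

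Finally I would translate this into the index set $I$. Writing each element of $\omega^j \langle \omega^e \rangle$ as $\omega^{ke+j}$ with $0 \le k \le p-2$ shows that the indices contributed by $j \in D$ are exactly $\{j, e+j, \ldots, (p-2)e+j\}$, giving $I = \bigcup_{j \in D}\{j, e+j, \ldots, (p-2)e+j\}$. Here the only point requiring a moment's care -- and really the only obstacle, since the argument is otherwise bookkeeping -- is to confirm that these index sets behave correctly: as $j$ ranges over $D \subseteq \{0, \ldots, e-1\}$ and $k$ over $\{0, \ldots, p-2\}$, the value $ke+j$ runs through $\{0, \ldots, (p-1)e - 1\} = \{0, \ldots, p^m-2\}$ with no wraparound modulo $p^m-1$ and with distinct values for distinct pairs $(k,j)$, so no reduction or identification of indices is needed and the displayed union is genuinely disjoint. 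This matches the range $0 \le i \le p^m-2$ used in the definition of $I$ and completes the proof.
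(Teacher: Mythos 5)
Your proof is correct and follows essentially the same route as the paper, which records this lemma without a formal proof after observing that $GF(p)^*=\langle\omega^e\rangle=C_0^{e,m}$ and that the trace-zero set is a hyperplane, hence closed under $GF(p)^*$-scaling. You have simply written out the details (coset decomposition, membership test via the representative $\omega^j$, and the no-wraparound index check) that the paper leaves implicit.
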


Finally, consider our set $X=\bigcup_{i \in I} C_{i}^{p^m-1,2m}.$  We can rewrite this as
$$X=\bigcup_{j \in D} C_{j}^{p^m-1,2m} \cup  C_{e+j}^{p^m-1,2m} \cup \cdots \cup  C_{(p-2)e+j}^{p^m-1,2m}$$
Observe that since $e=\frac{p^m-1}{p-1}$, $e|p^m-1$ and so 
$$C_{0}^{p^m-1,2m} \cup  C_{e}^{p^m-1,2m} \cup \cdots \cup  C_{(p-2)e}^{p^m-1, 2m}=C_{0}^{e,2m}$$ and more generally $C_{j}^{p^m-1,2m} \cup  C_{e+j}^{p^m-1,2m} \cup \cdots \cup  C_{(p-2)e+j}^{p^m-1,2m}=C_{j}^{e,2m}$ for $0 \leq j \leq p-2$.  So the $p-1$ cyclotomic classes of order $p^m-1$ corresponding to a fixed $j \in D$ partition a single cyclotomic class of order $e=\frac{p^m-1}{p-1}=p^{m-1}+p^{m-2}+ \cdots +p+1$.  Summarising, we have the following result:

\begin{thm}\label{PDSthm}
Let $p$ be an odd prime and $m>1$.  Let $D$ be the $(\frac{p^m-1}{p-1}, \frac{p^{m-1}-1}{p-1}, \frac{p^{m-2}-1}{p-1})$ Singer difference set in $(\mathbb{Z}_{\frac{p^m-1}{p-1}},+)$ defined by the trace-0 elements of $GF(p^m)$.  Denote by $C_i^{e,2m}$ the $i$-th cyclotomic class of order $e$ in $GF(p^{2m})$.
Then
\begin{itemize}
\item[(i)] $$X=\bigcup_{j \in D} C_{j}^{\frac{p^m-1}{p-1}, 2m}$$
 is a regular $(p^{2m}, p^{2m-1}-p^{m-1}(p-1)-1, p^{2m-2}-p^{m-1}(p-1)-2, p^{2m-2}-p^{m-1})$-PDS in $GF(p^{2m})$.
\item[(ii)] $X$ is precisely the PDS $\{x \in \mathbb{F}_q^{2m} \setminus \{0\}: Q_0(x)=0\}$ of Theorem \ref{X_PDS}.
\end{itemize}
\end{thm}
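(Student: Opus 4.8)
The plan is to prove both parts by establishing the single set identity
$$X=\bigcup_{j\in D} C_j^{e,2m}, \qquad e=\tfrac{p^m-1}{p-1},$$
since everything else then follows from results already in hand. Once this identity is proved, part (ii) is immediate (the right-hand side is, by construction, the same subset of $GF(p^{2m})$ as the $X$ of Theorem~\ref{cycPDS}, which that theorem identifies with $\{x\in GF(p^{2m})\setminus\{0\}: Q_0(x)=0\}$), and the regularity together with the parameter quadruple in part (i) is inherited verbatim from Theorem~\ref{X_PDS}, so no fresh difference counting is needed.

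First I would recall the cyclotomic description $X=\bigcup_{i\in I} C_i^{p^m-1,2m}$ from Theorem~\ref{cycPDS}(ii), and the decomposition of the index set from Lemma~\ref{traceT}, namely $I=\bigcup_{j\in D}\{j,\,e+j,\,\ldots,\,(p-2)e+j\}$. Substituting the latter into the former gives
$$X=\bigcup_{j\in D}\left(C_j^{p^m-1,2m}\cup C_{e+j}^{p^m-1,2m}\cup\cdots\cup C_{(p-2)e+j}^{p^m-1,2m}\right).$$
It then remains to collapse each inner union of $p-1$ classes of order $p^m-1$ into a single class $C_j^{e,2m}$ of order $e$.

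The key step, and the only point requiring genuine (if short) verification, is this collapse; I expect it to be the main — though mild — obstacle. I would argue it at the level of multiplicative subgroups of $\langle\alpha\rangle=GF(p^{2m})^*$. Since $e\mid p^m-1\mid p^{2m}-1$, the subgroup $\langle\alpha^{p^m-1}\rangle$, of order $\tfrac{p^{2m}-1}{p^m-1}=p^m+1$, is contained in $\langle\alpha^e\rangle$, of order $\tfrac{p^{2m}-1}{e}=(p-1)(p^m+1)$, with index exactly $(p-1)(p^m+1)/(p^m+1)=p-1$. Consequently the coset $C_j^{e,2m}=\alpha^j\langle\alpha^e\rangle$ is partitioned into the $p-1$ cosets $\alpha^{j+ke}\langle\alpha^{p^m-1}\rangle=C_{j+ke}^{p^m-1,2m}$ for $0\le k\le p-2$; one must check only that the indices $\{j+ke: 0\le k\le p-2\}$ supply distinct and exhaustive coset representatives, which is exactly the inner union above. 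Assembling these pieces yields $X=\bigcup_{j\in D}C_j^{e,2m}$, completing part (i)'s set description, and hence, as explained, both parts of the theorem.
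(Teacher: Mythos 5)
Your proposal is correct and takes essentially the same route as the paper: the theorem there is stated as a summary of the immediately preceding discussion, which likewise substitutes the index decomposition of Lemma~\ref{traceT} into the expression $X=\bigcup_{i\in I}C_i^{p^m-1,2m}$ from Theorem~\ref{cycPDS}(ii), collapses each block of $p-1$ classes of order $p^m-1$ into a single class $C_j^{\frac{p^m-1}{p-1},2m}$, and inherits the parameters from Theorem~\ref{X_PDS}. Your explicit check that $\langle\alpha^{p^m-1}\rangle$ has index $p-1$ in $\langle\alpha^{e}\rangle$ with distinct coset representatives $\alpha^{j+ke}$ simply makes precise the partition the paper asserts from $e\mid p^m-1$.
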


We end this section with the following technical corollary which will be used later:
\begin{cor}\label{X_k}
For $0 \leq k \leq \frac{p^m-1}{p-1}-1$, let $(-k)+D=\{(-k)+i: i \in D\}$, and define $$X_k=\bigcup_{j \in (-k)+D} C_{j}^{\frac{p^m-1}{p-1}, 2m}.$$
Then $X_k$ is a regular $(p^{2m}, p^{2m-1}-p^{m-1}(p-1)-1, p^{2m-2}-p^{m-1}(p-1)-2, p^{2m-2}-p^{m-1})$-PDS in $GF(p^{2m})$.
\end{cor}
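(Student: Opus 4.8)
The plan is to realize each $X_k$ as the image of the PDS $X$ from Theorem~\ref{PDSthm} under a group automorphism of the additive group $(GF(p^{2m}),+)$, and then to invoke the fact that PDSs, together with their parameters and regularity, are preserved by such automorphisms. The key structural observation is that the cyclotomic class $C_i^{e,2m}=\alpha^i\langle\alpha^e\rangle$ (where $e=\frac{p^m-1}{p-1}$) depends only on $i \bmod e$, since $\alpha^{i+e}\langle\alpha^e\rangle=\alpha^i\langle\alpha^e\rangle$; this is what lets the index set be manipulated in $\mathbb{Z}_e$.

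First I would establish the identity $X_k=\alpha^{-k}X$, where $\alpha$ is the fixed primitive element of $GF(p^{2m})^*$. For any index $j$ we have $\alpha^{-k}C_j^{e,2m}=\alpha^{-k}\alpha^j\langle\alpha^e\rangle=\alpha^{j-k}\langle\alpha^e\rangle=C_{(-k)+j}^{e,2m}$, with the index read modulo $e$. Writing $X=\bigcup_{j\in D}C_j^{e,2m}$ and multiplying through by $\alpha^{-k}$, the index set $D$ is carried to $(-k)+D$, yielding exactly $X_k=\bigcup_{j\in(-k)+D}C_j^{e,2m}$.

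Next I would observe that, for any fixed nonzero $c\in GF(p^{2m})$, the map $\mu_c\colon x\mapsto cx$ is an automorphism of $(GF(p^{2m}),+)$: it is additive by distributivity and bijective since $c$ is a unit. A group automorphism sends a $(v,k,\lambda,\mu)$-PDS to a $(v,k,\lambda,\mu)$-PDS, because the difference multiset of $\mu_c(X)$ is the $\mu_c$-image of that of $X$, and $\mu_c$ carries $X\setminus\{0\}$ onto $\mu_c(X)\setminus\{0\}$ and its complement onto the corresponding complement, so the multiplicities $\lambda$ and $\mu$ are assigned to the same two parts. Applying this with $c=\alpha^{-k}$ to the PDS $X$ and using the identity above, we conclude that $X_k$ is a PDS with the same parameters as $X$. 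Regularity is preserved as well: $\mu_c(0)=0$ gives $0\notin X_k$, and $-X=X$ together with additivity of $\mu_c$ gives $-X_k=\mu_c(-X)=\mu_c(X)=X_k$.

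There is no serious obstacle here; the only point demanding care is the bookkeeping of indices modulo $e$, ensuring that multiplication by $\alpha^{-k}$ induces precisely the translation $j\mapsto(-k)+j$ on the index set $D\subseteq\mathbb{Z}_e$ rather than some other permutation. Once that is verified, the result follows formally from the automorphism-invariance of the PDS property, with the parameters inherited verbatim from Theorem~\ref{PDSthm}.
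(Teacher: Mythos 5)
Your proposal is correct and follows essentially the same route as the paper: both identify $X_k=\alpha^{-k}X$ via the index shift on the cyclotomic classes and then invoke the fact that multiplication by a nonzero field element is an additive automorphism preserving the PDS property and its parameters. Your write-up simply spells out in more detail the automorphism-invariance step that the paper treats as immediate.
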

\begin{proof}
Immediate since $X=\bigcup_{j \in D} \alpha^j C_0^{\frac{p^m-1}{p-1}, 2m}$ and 
$$X_k=\bigcup_{j \in (-k)+D} \alpha^j C_0^{\frac{p^m-1}{p-1}, 2m}=\bigcup_{i \in D} \alpha^{(-k)+i} C_0^{\frac{p^m-1}{p-1}, 2m}=\alpha^{-k}\bigcup_{i \in D} \alpha^i C_0^{\frac{p^m-1}{p-1}, 2m}=\alpha^{-k}X.$$
Since $X$ is a PDS comprising a union of cyclotomic classes of $GF(p^{2m})$, then $\alpha^{-k}X$ is a PDS with the same parameters.
\end{proof}

\section{Character theory}
\label{charactertheory}

Let $G$ be a finite abelian group. A {\em character} on $G$, often denoted $\chi$, is a homomorphism from $G$ to the group of complex numbers under multiplication. The fact that $\chi$ is a homomorphism implies that for all $g \in G, \chi(g)$ must be an $n^{th}$ root of unity for $n$ an integer satisfying $ng = 0_G$.  The principal character $\chi_0$ is the character that maps all elements of $G$ to $1 \in \C$; all other characters are called nonprincipal.

Consider the field $F = GF(p^m)$, and let $\omega$ be a primitive element (i.e. a generator of the multiplicative group of $F$). All nonprincipal (additive) characters of $F$ have the form $\chi_k(x) = (e^{2 \pi i/p})^{Tr_{m/1}(\omega^k x)}$ for $0 \leq k \leq p^m-2$. Similarly, if $K = GF(p^{2m})$ has primitive element $\alpha$, then all nonprincipal (additive) characters of $K$ will have the form $\psi_{\ell}(x) = (e^{2 \pi i/p}
)^{Tr(\alpha^{\ell}x)}$ for $0 \leq \ell \leq p^{2m}-2$.

Character theory has been one of the primary tools used to investigate PDSs because of the following theorem (see~\cite{Ma_1994}).

\begin{thm}
\label{characterTHPDSs}
Let $G$ be a finite abelian group of order $v$. Suppose $k, \lambda$ and $\mu$ are positive integers satisfying $k^2 = k + \lambda k + \mu (v-k-1)$.   Let $D$ be a $k$-element subset of $G$ such that $D=-D$, and $D$ does not contain the identity.  Then $D$ is a $(v,k,\lambda,\mu)$ PDS in $G$ if and only if  for all nonprincipal characters $\chi$ on $G$ we have $\chi(D) \in \{ 1/2((\lambda-\mu) \pm \sqrt{(\mu-\lambda)^2 + 4(k-\mu)} ) \}$.
\end{thm}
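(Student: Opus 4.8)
The plan is to recast everything inside the group algebra $\C[G]$, where a subset $D \subseteq G$ is identified with the element $D = \sum_{d \in D} d$ and where $D^{(-1)} := \sum_{d \in D}(-d)$. Extending each character $\chi$ linearly to $\C[G]$ via $\chi\left(\sum_g a_g g\right) = \sum_g a_g \chi(g)$, I would first record the standard translation of the PDS condition into a single group-ring identity: counting, for each $g \in G$, the number of ordered pairs $(d_1,d_2) \in D \times D$ with $d_1 - d_2 = g$, the set $D$ is a $(v,k,\lambda,\mu)$-PDS precisely when
$$D\,D^{(-1)} = (k-\mu)\,0_G + (\lambda-\mu)\,D + \mu\,G$$
in $\C[G]$, where $0_G$ now denotes the identity regarded as an element of $\C[G]$ and $G = \sum_{g \in G} g$. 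The reading-off of coefficients is routine: the coefficient of $0_G$ is $k$ on both sides, of each $g \in D$ is $\lambda$, and of each remaining $g$ is $\mu$, using that $0_G \notin D$. Since $D = -D$ we have $D^{(-1)} = D$, so the left-hand side is simply $D^2$.

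For the forward (``only if'') direction, I would apply a nonprincipal character $\chi$ to this identity. Multiplicativity gives $\chi(D^2) = \chi(D)^2$, while $\chi(0_G) = 1$ and $\chi(G) = 0$ for nonprincipal $\chi$ by orthogonality; moreover $D = -D$ forces $\chi(D) = \chi(-D) = \overline{\chi(D)}$, so $\chi(D)$ is real. Substituting yields the scalar quadratic $\chi(D)^2 - (\lambda - \mu)\chi(D) - (k - \mu) = 0$, whose two real roots are exactly $\tfrac12\bigl((\lambda - \mu) \pm \sqrt{(\mu-\lambda)^2 + 4(k-\mu)}\bigr)$ (using $(\lambda-\mu)^2 = (\mu-\lambda)^2$), giving the claimed values.

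For the reverse (``if'') direction, the key observation is that the arithmetic hypothesis $k^2 = k + \lambda k + \mu(v-k-1)$ is precisely the statement that the displayed group-ring identity holds after applying the principal character $\chi_0$: indeed $\chi_0(D^2) = k^2$, while $\chi_0\bigl((k-\mu)0_G + (\lambda-\mu)D + \mu G\bigr) = (k-\mu) + (\lambda-\mu)k + \mu v$, and these agree exactly when the hypothesis holds. Meanwhile the assumption that every nonprincipal $\chi$ has $\chi(D)$ in the two-element set is equivalent (these being exactly the roots of the quadratic above) to $\chi(D)^2 = (k-\mu) + (\lambda-\mu)\chi(D)$, i.e. to the identity holding after applying $\chi$. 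Thus both sides of the displayed identity have equal image under every character of $G$; since the characters of a finite abelian group form a basis for the functions $G \to \C$ (equivalently, Fourier inversion), two elements of $\C[G]$ with equal character values coincide, so the identity holds in $\C[G]$, and reading off coefficients shows $D$ is a $(v,k,\lambda,\mu)$-PDS.

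The main obstacle is bookkeeping rather than conceptual: one must ensure $D$ contains neither $0_G$ nor repeated elements so that the coefficient extraction is clean, and one must verify that ``$\chi(D)$ lies in the given two-element set'' is genuinely equivalent to the quadratic $\chi(D)^2 = (k-\mu)+(\lambda-\mu)\chi(D)$, since this is what allows the nonprincipal hypotheses to feed into the inversion argument. The one substantive ingredient is the separation property of characters on a finite abelian group; everything else reduces to the single group-ring identity and its evaluation under each $\chi$.
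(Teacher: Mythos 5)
Your proposal is correct, and it is the standard group-ring/character-sum argument: the paper itself offers no proof of this theorem, citing it from Ma's survey \cite{Ma_1994}, where essentially this same reduction to the identity $DD^{(-1)}=(k-\mu)0_G+(\lambda-\mu)D+\mu G$ and its evaluation under all characters (with Fourier inversion for the converse) is used. All the steps check out, including the role of the hypothesis $k^2=k+\lambda k+\mu(v-k-1)$ as the principal-character case of the identity.
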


When we combine Theorems~\ref{MaTheorem} and~\ref{X_k}, we see that the sets $X_k$ defined in Corollary \ref{X_k} will have nonprincipal character values of either $p^{m-1}-1$ or $-[(p-1)p^{m-1}+1]$. We will use this fact in the next section to prove our main construction.

When working with PDSs in abelian groups, Delsarte~\cite{Del} introduced the {\em dual PDS}. The set of characters $G^*:= \{ \chi:G \rightarrow \C|\chi \mbox{ is a homomorphism} \}$ is a group under multiplication, and a simple computation demonstrates that $G^*$ is isomorphic to $G$. If $D$ is a $(v,k,\lambda, \mu)$ PDS in $G$, we define $D^*:= \{ \chi \in G^* : \chi(D) = 1/2((\lambda-\mu) + \sqrt{(\mu-\lambda)^2 + 4(k-\mu)} ) \}$. 

\begin{thm}
\label{dualPDS}
If $D$ is a $(v,k,\lambda, \mu)$ PDS in the abelian group $G$ and $D^* = \{ \chi \in G^* : \chi(D) = 1/2((\lambda-\mu) + \sqrt{(\mu-\lambda)^2 + 4(k-\mu)} ) \} \subset G^*$, then $D^*$ is a $(v, k^*, \lambda^*, \mu^*)$ PDS in $G^*$ for 

\begin{eqnarray*} \beta & = & \lambda-\mu \\
\gamma & = & k-\lambda \\
\Delta & = & \beta^2+4 \gamma \\
k^* & = & [(\sqrt{\Delta}-\beta)(v-1)-2k]/(2 \sqrt{\Delta}) \\
\lambda^* & = & (v-2k+\beta-\sqrt{\Delta})/\sqrt{\Delta} + [4k^*-v^2/\sqrt{\Delta} + (v-2k+\beta-\sqrt{\Delta}/\sqrt{\Delta})^2]/4 \\
\mu^* & = & [4k^* - v^2/\Delta +(v-2k+\beta-\sqrt{\Delta})^2/(\Delta)]/4. \\
\end{eqnarray*}
\end{thm}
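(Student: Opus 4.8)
The plan is to deploy Theorem~\ref{characterTHPDSs} in both directions: first to read off the structure of $D$, and then, in its converse form, to certify that $D^*$ is a PDS in $G^*\cong G$. Write $v=|G|$ and let $\theta_+$ and $\theta_-$ denote the two nonprincipal character values guaranteed by that theorem, namely $\theta_\pm=\tfrac12\big((\lambda-\mu)\pm\sqrt{(\mu-\lambda)^2+4(k-\mu)}\big)$, with $\theta_+$ the value used to define $D^*$. Put $\sqrt{\Delta}:=\theta_+-\theta_-=\sqrt{(\mu-\lambda)^2+4(k-\mu)}$, so that $\theta_++\theta_-=\beta$ and $\theta_+\theta_-=-(k-\mu)$, and $\theta_-=\tfrac12(\beta-\sqrt{\Delta})$. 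Using the canonical isomorphism $G\cong(G^*)^*$, I will write $\widehat{g}\in(G^*)^*$ for the character $\chi\mapsto\chi(g)$ of $G^*$; the goal is to show that $\widehat{g}(D^*)=\sum_{\chi\in D^*}\chi(g)$ takes exactly two values as $g$ ranges over $G\setminus\{0_G\}$, and to identify $|D^*|$ with the stated $k^*$.

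The key step is an orthogonality computation. On $G^*$ the map $\chi\mapsto\chi(D)$ is the step function taking the value $k$ at the principal character $\chi_0$, the value $\theta_+$ on $D^*$, and $\theta_-$ elsewhere, so
\[
\chi(D)=\theta_-+(\theta_+-\theta_-)\,\mathbf{1}_{D^*}(\chi)+(k-\theta_-)\,\mathbf{1}_{\{\chi_0\}}(\chi).
\]
I then evaluate $S(g):=\sum_{\chi\in G^*}\chi(D)\,\overline{\chi(g)}$ in two ways. Expanding $\chi(D)=\sum_{d\in D}\chi(d)$ and using the orthogonality relation $\sum_{\chi\in G^*}\chi(h)=v\,\mathbf{1}_{\{0_G\}}(h)$ gives $S(g)=v\,\mathbf{1}_{D}(g)$; substituting the step-function expression and applying orthogonality again gives $S(g)=\theta_- v\,\mathbf{1}_{\{0_G\}}(g)+(\theta_+-\theta_-)\,\overline{\widehat{g}(D^*)}+(k-\theta_-)$. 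Equating the two and solving,
\[
(\theta_+-\theta_-)\,\overline{\widehat{g}(D^*)}=v\,\mathbf{1}_{D}(g)-\theta_- v\,\mathbf{1}_{\{0_G\}}(g)-(k-\theta_-).
\]

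Next I verify that $D^*$ is symmetric and omits the principal character of $G^*$, so that Theorem~\ref{characterTHPDSs} applies to $G^*$. Symmetry holds because, for $\chi\in D^*$, the inverse character satisfies $\chi^{-1}(D)=\overline{\chi(D)}=\overline{\theta_+}=\theta_+$ (the roots are real), whence $\chi^{-1}\in D^*$; this also shows each $\widehat{g}(D^*)$ is real, so I may drop the conjugate above. Since $\chi_0(D)=k\neq\theta_+$, we have $\chi_0\notin D^*$. The displayed identity now shows $\widehat{g}(D^*)$ depends only on which of the three classes $g$ occupies: at $g=0_G$ it equals $\big(-\theta_-(v-1)-k\big)/\sqrt{\Delta}$, which (substituting $-\theta_-=\tfrac12(\sqrt{\Delta}-\beta)$) one checks is exactly the stated $k^*=|D^*|$; for $g\in D$ it equals $(v-k+\theta_-)/\sqrt{\Delta}$; and for $g\in G\setminus(D\cup\{0_G\})$ it equals $(\theta_--k)/\sqrt{\Delta}$. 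Thus $D^*$ is symmetric, avoids the identity of $G^*$, and has exactly two nonprincipal character values, so the converse direction of Theorem~\ref{characterTHPDSs} certifies $D^*$ as a $(v,k^*,\lambda^*,\mu^*)$-PDS.

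The remainder is purely algebraic bookkeeping, and is where I expect the only real friction. Denoting the two nonprincipal values $\theta^*_\pm$ as above, I match them against the eigenvalue relations for the PDS $D^*$, namely $\lambda^*-\mu^*=\theta^*_++\theta^*_-$ and $k^*-\mu^*=-\theta^*_+\theta^*_-$, together with the expression already obtained for $k^*$. Solving this $2\times2$ system for $\lambda^*$ and $\mu^*$ and simplifying yields the closed forms in the statement. The main obstacle is thus not conceptual but the careful substitution of $\theta_-=\tfrac12(\beta-\sqrt{\Delta})$ and $\theta_+-\theta_-=\sqrt{\Delta}$ throughout, and confirming that the resulting $k^*,\lambda^*,\mu^*$ are the nonnegative integers forced by the counting identity $(k^*)^2=k^*+\lambda^*k^*+\mu^*(v-k^*-1)$; no ingredient beyond orthogonality of characters and Theorem~\ref{characterTHPDSs} is required.
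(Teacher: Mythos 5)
The paper does not actually prove this theorem: it is quoted as a known result of Delsarte \cite{Del} (see also Ma's survey), so there is no internal proof to compare against. Your orthogonality argument is the standard proof of PDS duality and is essentially correct: the decomposition of $\chi\mapsto\chi(D)$ as a three-valued step function, the double evaluation of $S(g)=\sum_{\chi}\chi(D)\overline{\chi(g)}$, the symmetry of $D^*$ and exclusion of $\chi_0$, and the resulting three-valued formula for $\widehat{g}(D^*)$ all check out, and your value at $g=0_G$ does reproduce the stated $k^*$. Two caveats. First, to invoke Theorem~\ref{characterTHPDSs} in the forward direction you need $D$ to be \emph{regular} ($0_G\notin D$ and $D=-D$); the theorem as stated only hypothesises that $D$ is a PDS, so you should either add regularity as a hypothesis (it is automatic when $\lambda\neq\mu$, and holds for all PDSs used in this paper) or note that you are using it — in particular your step $\mathbf{1}_D(0_G)=0$ silently relies on it. Second, you leave the derivation of $\lambda^*$ and $\mu^*$ as ``bookkeeping''; the system $\lambda^*-\mu^*=\theta^*_++\theta^*_-$, $k^*-\mu^*=-\theta^*_+\theta^*_-$ is indeed the right one and determines them, but be aware that the closed forms printed in the theorem statement contain apparent typographical slips (e.g.\ $v^2/\sqrt{\Delta}$ where $v^2/\Delta$ is meant, a misplaced parenthesis in $(v-2k+\beta-\sqrt{\Delta}/\sqrt{\Delta})^2$, and $\gamma=k-\lambda$ where the character-value formula uses $k-\mu$), so a literal match against the displayed expressions will not come out; matching against the corrected Delsarte formulas does. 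Also note you should verify (or cite) the counting identity $(k^*)^2=k^*+\lambda^*k^*+\mu^*(v-k^*-1)$ before applying the converse of Theorem~\ref{characterTHPDSs}; it follows from $\sum_{g}\widehat{g}(D^*)=0$ and $\sum_g|\widehat{g}(D^*)|^2=vk^*$, but it is a hypothesis of that theorem, not a conclusion.
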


In our situation, we want to construct a PDS in $\Z_p^{3m}$ with the following parameters 

$$(p^{3m}, (p^{m+1} - p^m + p)(p^m-1), p^m-p+(p^{m+1}-p^m+p)(p-2), (p^{m+1}-p^m+p)(p-1)).$$

\noindent This is the Denniston $r=1$ case. If we are able to construct this PDS, then Theorem~\ref{dualPDS} implies that the dual PDS will be the Denniston $r=m-1$ case, namely 

$$(p^{3m}, (p^{2m-1}-p^m+p^{m-1})(p^m-1), p^m-p^{m-1}+(p^{2m-1}-p^m+p^{m-1})(p^{m-1}-2),$$
$$(p^{2m-1}-p^m + p^{m-1})(p^{m-1}-1)).$$ 

For the $r=1$ Denniston case, the character values from Theorem~\ref{characterTHPDSs} are $p^m-p$ and $-((p-1)(p^m+1)+1)$. In the next section we define a set that will have one of these two values for all nonprincipal characters.

\section{Main construction}
\label{mainconstruction}

We now turn our attention to the general case, by which we mean the following: our group will be $GF(p^m)^+ \times GF(p^{2m})^+ \cong \Z_p^{3m}$, and we will use $\omega$ as a primitive element of $GF(p^m)$ and $\alpha$ as a primitive element of $GF(p^{2m})$.   We define 

$${\mathcal D}:= \displaystyle \bigcup_{i=0}^{\frac{p^m-1}{p-1}-1} (\omega^i \langle \omega^{\frac{p^m-1}{p-1}}  \rangle) \times (C_{i}^{\frac{p^m-1}{p-1},2m} \cup \{ 0_{GF(p^{2m})} \}).$$

The next theorem is the main result in the paper, and it demonstrates that Denniston PDSs exist in the $p$ odd case for $r=1$.

\begin{thm}
\label{mainresultsmallcase}
The set ${\mathcal D}$ is a $(p^{3m}, (p^m-1) \cdot ((p-1)(p^m+1)+1), p^m-p+(p^{m+1}-p^m+p)(p-2), (p^{m+1}-p^m+p)(p-1))$ PDS in $GF(p^m)^+ \times GF(p^{2m})^+$.
\end{thm}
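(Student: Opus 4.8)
The set $\mathcal{D}$ is a union over indices $i$ of "boxes" of the form $A_i \times B_i$, where $A_i = \omega^i \langle \omega^{(p^m-1)/(p-1)} \rangle$ is a cyclotomic class of order $e = (p^m-1)/(p-1)$ in $GF(p^m)^*$ and $B_i = C_i^{e,2m} \cup \{0\}$ is a cyclotomic class in $GF(p^{2m})^*$ together with the zero element. My plan is to verify the PDS property via character theory using Theorem~\ref{characterTHPDSs}. First I would confirm the routine hypotheses: that $\mathcal{D} = -\mathcal{D}$ (which follows since $-1$ is an even power of each primitive element when $p$ is odd, so negation fixes each cyclotomic class setwise), that $0_G \notin \mathcal{D}$, that $|\mathcal{D}| = e \cdot (p-1) \cdot ((p-1)(p^m+1)+1)/\ldots$ matches the stated $k$, and that the parameters satisfy $k^2 = k + \lambda k + \mu(v-k-1)$. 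The heart of the proof is the character sum computation.

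**The character computation.**

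Since $G = GF(p^m)^+ \times GF(p^{2m})^+$, every nonprincipal character factors as $\chi_k \otimes \psi_\ell$, where $\chi_k$ is a character of $GF(p^m)^+$ and $\psi_\ell$ a character of $GF(p^{2m})^+$ (using the explicit trace forms from Section~\ref{charactertheory}). I would compute
$$
(\chi_k \otimes \psi_\ell)(\mathcal{D}) = \sum_{i=0}^{e-1} \chi_k\bigl(A_i\bigr)\,\psi_\ell\bigl(B_i\bigr)
= \sum_{i=0}^{e-1} \chi_k\bigl(A_i\bigr)\Bigl(\psi_\ell\bigl(C_i^{e,2m}\bigr) + 1\Bigr),
$$
and split into cases according to whether each of $\chi_k$, $\psi_\ell$ is principal. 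The plan is to exploit the connection already established: by Corollary~\ref{X_k} and the remark following Theorem~\ref{characterTHPDSs}, the $\psi_\ell$-value on a suitable union of the $C_i^{e,2m}$ (namely on the translates $X_k$ of the PDS $X$) takes only the two values $p^{m-1}-1$ and $-[(p-1)p^{m-1}+1]$. The main task is to reorganize the double sum so that these known values can be substituted in. The clean way to do this is to first sum over $i$ weighted by $\chi_k(A_i)$; because the $A_i$ are the cyclotomic classes whose indices in $D$ form a Singer difference set, the inner structure should collapse, and the difference-set property of $D$ (Theorem~\ref{PDSthm}) is what forces the correlation to land on exactly two values.

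**The main obstacle.**

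The hard part will be the mixed case where \emph{both} $\chi_k$ and $\psi_\ell$ are nonprincipal, since there the character value is a genuine weighted sum of cyclotomic correlations rather than a single term. Here I expect to need the precise interaction between the Singer difference set $D$ (governing which $A_i$ appear) and the restriction of $\psi_\ell$ to the $B_i$. Concretely, I would relate the weighted sum $\sum_i \chi_k(A_i)\psi_\ell(C_i^{e,2m})$ to the character value of one of the translated PDSs $X_k$ of Corollary~\ref{X_k}: as $\chi_k$ ranges over the characters of $GF(p^m)^+$, multiplication by $\omega^k$ permutes the $A_i$ and translates $D$, so the nonprincipal $\chi_k$-coefficients pick out exactly the translate $(-k)+D$ indexing some $X_k$. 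The key lemma to prove is therefore that for each nonprincipal pair, the double sum equals $\psi_\ell(X_{k'})$ for an appropriate $k'$, up to a controlled additive constant coming from the $\{0\}$ terms and from the principal contributions; substituting the two-valued $\psi_\ell(X_{k'})$ then yields precisely $p^m - p$ or $-[(p-1)(p^m+1)+1]$.

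**Finishing.**

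Once all four cases (principal/principal being excluded, and the three nonprincipal combinations) are shown to give character values in $\{p^m-p,\ -[(p-1)(p^m+1)+1]\}$, Theorem~\ref{characterTHPDSs} applies directly: these are exactly the two values $\tfrac12((\lambda-\mu)\pm\sqrt{(\mu-\lambda)^2+4(k-\mu)})$ for the stated Denniston $r=1$ parameters, a fact I would verify by a direct substitution of $\lambda$, $\mu$, $k$. Combined with the size count and the $\mathcal{D}=-\mathcal{D}$ check, this establishes that $\mathcal{D}$ is a PDS with the claimed parameters, completing the proof.
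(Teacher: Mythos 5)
Your proposal is correct and follows essentially the same route as the paper: a case split on whether $\chi_k$ and $\psi_\ell$ are principal, with the mixed nonprincipal case handled by observing that $\chi_k$ takes the value $p-1$ exactly on the classes indexed by the translate $(-\overline{k})+D$ of the Singer difference set, so the double sum collapses to $\psi_\ell(X_{\overline{k}})$ plus controlled correction terms, and the two-valuedness of $\psi_\ell(X_{\overline{k}})$ from Corollary~\ref{X_k} gives the required two character values. The only detail worth pinning down when you write this up is the exact bookkeeping of the $\{0\}$ and complement contributions, which the paper carries out explicitly to land on $p^m-p$ and $-p^m(p-1)-p$.
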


\begin{proof}
In this proof we will write $C_{i}^{(p^m-1)/(p-1),2m}$ as $C_{i}^{(p^m-1)/(p-1)}$ for clarity.\\
Every character $\Phi$ of our group $GF(p^m)^+ \times GF(p^{2m})^+$ can be written as $\Phi = \chi_k \times \psi_{\ell}$ for $\chi_k$ a character of $GF(p^m)^+$ and $\psi_{\ell}$ a character of $GF(p^{2m})^+$. 
We break the proof into three cases based on the characters of the group.
\begin{description}
\item[Case 1: $\chi_k$ principal, $\psi_{\ell}$ nonprincipal] In this case, $\chi_k(\omega^i \langle \omega^{\frac{p^m-1}{p-1}} \rangle) = p-1$ for $0 \leq i \leq \frac{p^m-1}{p-1}-1$. Thus, 

\begin{eqnarray*}
(\chi_k \times \psi_{\ell})({\mathcal D}) & = & (p-1) \displaystyle \sum_{i=0}^{\frac{p^m-1}{p-1}-1} \psi_{\ell}(C_i^{\frac{p^m-1}{p-1}-1} \cup \{ 0_{GF(p^{2m})} \}) \\
& = & (p-1) \psi_{\ell}(\sum_{i=0}^{\frac{p^m-1}{p-1}-1} C_i^{\frac{p^m-1}{p-1}-1}) + (p-1) \cdot \frac{p^m-1}{p-1} \\
& = & (p-1)(-1) + p^m-1 = p^m-p. \\
\end{eqnarray*}

\item[Case 2: $\chi_k$ nonprincipal, $\psi_{\ell}$ principal] In this case, $\psi_{\ell}(C_i^{\frac{p^m-1}{p-1}} \cup \{ 0_{GF(p^{2m})} \}) = (p-1)(p^m+1)+1$ for $0 \leq i \leq \frac{p^m-1}{p-1}-1$. Thus,

\begin{eqnarray*}
(\chi_k \times \psi_{\ell})({\mathcal D}) & = &  \displaystyle ((p-1)(p^m+1)+1) \sum_{i=0}^{\frac{p^m-1}{p-1}-1} \chi_k( \omega^i \langle \omega^{\frac{p^m-1}{p-1}} \rangle) \\
& = & -((p-1)(p^m+1)+1) \\
\end{eqnarray*}

\item[Case 3: $\chi_k$ nonprincipal, $\psi_{\ell}$ nonprincipal] 

The kernel of $\chi_k$ is 
$$Ker(\chi_k) = \{ x \in GF(p^m) : Tr(\omega^kx) =  0_{GF(p^{m})} \},$$ 
and the size of the kernel is $p^{m-1}$. Now, $x \in Ker(\chi_k)$ precisely if $x=0_{GF(p^{m})}$ or $\omega^kx \in T$. So, by Lemma 3.3, we have $Ker(\chi_k) = \omega^{-k}T \cup \{ 0_{GF(p^{m})} \} = \cup_{i \in D} \omega^{-k+i} \langle \omega^{\frac{p^m-1}{p-1}} \rangle \cup \{ 0_{GF(p^m)} \}$. 

Let $\overline{k} \equiv k \mod \frac{p^m-1}{p-1}$;  then $Ker(\chi_k) = \cup_{j \in (-\overline{k})+D} \omega^j \langle \omega^{\frac{p^m-1}{p-1}} \rangle \cup \{ 0_{GF(p^m)} \}$ where $(-\overline{k})+D = \{ (-\overline{k})+i : i \in D \}$. A consequence of this is the following:

\begin{equation*}
 \chi_k(\omega^{j} \langle \omega^{\frac{p^m-1}{p-1}} \rangle) = \begin{cases}

p-1 & j \in (-\overline{k})+D \\

          -1 & \mbox{otherwise}

       \end{cases}
\end{equation*}

Similarly, the set $\cup_{j \in (-\overline{k})+D} C_j^{\frac{p^m-1}{p-1}}$ (in other words, the union of classes in $GF(p^{2m})$ corresponding to the indices of the elements of $Ker(\chi_k)$) is precisely $X_{\overline{k}}$ as defined in Corollary 3.5, and combining that with Theorem 2.4 we get

\begin{equation*}
 \psi_{\ell}(X_{\overline{k}}) = \begin{cases}

-[(p-1)p^{m-1} + 1] & \\

          p^{m-1}-1 &

       \end{cases}
\end{equation*}

Using the fact that $\psi_{\ell}(GF(p^{2m}))=0$, we then have 
\begin{equation*}
\psi_{\ell}(GF(p^{2m}) - X_{\overline{k}}-\{ 0_{GF(p^{2m})} \}) = \begin{cases}

(p-1)p^{m-1}  & \\

          -p^{m-1} &

       \end{cases}
\end{equation*}

Combining these results, we obtain


$$(\chi_k \times \psi_{\ell})({\mathcal D})  =  \sum_{i=0}^{\frac{p^m-1}{p-1}-1} \chi_k(\omega^i \langle \omega^{\frac{p^m-1}{p-1}} \rangle) \psi_{\ell}(C_i^{(p^m-1)/(p-1)} \cup \{ 0_{GF(p^{2m})} \}) $$

$$ = (p-1) (\psi_{\ell}(X_{\overline{k}})+\frac{p^{m-1}-1}{p-1}) + (-1) \left(\psi_{\ell}(GF(p^{2m}) - X_{\overline{k}}-\{ 0_{GF(p^{2m})} \}) + (\frac{p^m-1}{p-1} - \frac{p^{m-1}-1}{p-1})\right)$$

$$ = \begin{cases} (p-1) (-[(p-1)p^{m-1} + 1]+\frac{p^{m-1}-1}{p-1}) + (-1) [(p-1)p^{m-1}+(\frac{p^m-1}{p-1} - \frac{p^{m-1}-1}{p-1})] & \\

(p-1) (p^{m-1}-1+\frac{p^{m-1}-1}{p-1}) + (-1) (-p^{m-1}+(\frac{p^m-1}{p-1}-\frac{p^{m-1}-1}{p-1})) & \end{cases} $$

$$ = \begin{cases} -p^m(p-1)-p & \\

p^m-p & \end{cases} $$
\end{description}

Since every nonprincipal character $\chi_k \times \psi_{\ell}$ has a sum over ${\mathcal D}$ of either $-p^m(p-1)-p$ or $p^m-p$, Theorem~\ref{characterTHPDSs} implies that ${\mathcal D}$ is a PDS as claimed. \end{proof}

We note that the sets of $(k,\ell)$ pairs which correspond to each of the two possible values of $ \psi_{\ell}(X_{\overline{k}})$ in the final case may be determined by the techniques of \cite{MomXia1}.

Finally, an application of Theorem \ref{dualPDS} yields:
\begin{thm}
The dual of ${\mathcal D}$ is a $(p^{3m}, (p^{2m-1}-p^m+p^{m-1})(p^m-1), p^m-p^{m-1}+(p^{2m-1}-p^m+p^{m-1})(p^{m-1}-2),(p^{2m-1}-p^m + p^{m-1})(p^{m-1}-1))$-PDS  in $GF(p^m)^+ \times GF(p^{2m})^+$.
\end{thm}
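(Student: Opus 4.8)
The plan is to apply the Delsarte dual-PDS machinery of Theorem~\ref{dualPDS} directly to the PDS $\mathcal{D}$ produced in Theorem~\ref{mainresultsmallcase}. Since the ambient group $G=GF(p^m)^+ \times GF(p^{2m})^+$ is elementary abelian, its character group $G^*$ is (non-canonically) isomorphic to $G$; thus the dual PDS, although it formally lives in $G^*$, may be regarded as a PDS in $GF(p^m)^+ \times GF(p^{2m})^+$, exactly as the statement asserts. It therefore remains only to feed the primal parameters through the formulas of Theorem~\ref{dualPDS} and to check that the output is the claimed Denniston $r=m-1$ tuple.

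First I would record the input data from Theorem~\ref{mainresultsmallcase}: $v=p^{3m}$, $k=(p^{m+1}-p^m+p)(p^m-1)$, together with the stated $\lambda$ and $\mu$. The one genuinely useful simplification is the discriminant. Rather than expand $\beta^2+4\gamma$ blindly, I would observe that $\sqrt{\Delta}$ is exactly the gap between the two nonprincipal character values of $\mathcal{D}$ already computed in the proof of Theorem~\ref{mainresultsmallcase}, namely $(p^m-p)-\bigl(-(p^{m+1}-p^m+p)\bigr)=p^{m+1}$; hence $\Delta=p^{2m+2}$ and $\sqrt{\Delta}=p^{m+1}$. Likewise $\beta=\lambda-\mu=2p^m-p^{m+1}-2p$. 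Having a clean prime-power value of $\sqrt{\Delta}$ is what makes the remaining substitutions manageable.

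Next I would substitute $v$, $k$, $\beta$ and $\sqrt{\Delta}=p^{m+1}$ into the expressions for $k^*$, $\lambda^*$ and $\mu^*$. For $k^*$ the common factor $(p^{m+1}-p^m+p)$ pulls out and, after cancelling $p^{m+1}$, the expression collapses; one should land on $k^*=(p^{2m-1}-p^m+p^{m-1})(p^m-1)$, which is precisely the general Denniston $k$ evaluated at $r=m-1$. The computations for $\lambda^*$ and $\mu^*$ are of the same flavour but heavier, each involving the $v^2/\sqrt{\Delta}$ and squared terms; after collecting powers of $p$ these should reduce to $p^m-p^{m-1}+(p^{2m-1}-p^m+p^{m-1})(p^{m-1}-2)$ and $(p^{2m-1}-p^m+p^{m-1})(p^{m-1}-1)$ respectively.

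The main obstacle is not conceptual but bookkeeping. The formulas for $\lambda^*$ and $\mu^*$ are long rational expressions, and the principal risk is a sign or off-by-a-power slip. This is compounded by the need to track which of the two complementary character-sets is being taken as the dual, since the set achieving the value $p^m-p$ and the set achieving $-(p^{m+1}-p^m+p)$ have complementary sizes; one must make the root-selection consistent with the definition of $\mathcal{D}^*$ so that $k^*$ emerges as the multiplicity of the intended eigenvalue rather than of its complement. I would guard against both risks by two independent checks: verifying that the resulting triple $(k^*,\lambda^*,\mu^*)$ satisfies the PDS parameter identity $(k^*)^2=k^*+\lambda^* k^*+\mu^*(v-k^*-1)$ of Theorem~\ref{characterTHPDSs}, and confirming that it coincides with the general Denniston parameter tuple at $r=m-1$. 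Passing both checks certifies the stated result.
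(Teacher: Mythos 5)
Your approach is exactly the paper's: the entire published proof of this theorem is the single line ``an application of Theorem~\ref{dualPDS} yields,'' so your plan of feeding the parameters of Theorem~\ref{mainresultsmallcase} through the dual formulas, with $\sqrt{\Delta}=p^{m+1}$ read off as the gap between the two nonprincipal character values, is precisely the intended argument. One substantive remark: the root-selection issue you flag is not hypothetical. Taking the formulas of Theorem~\ref{dualPDS} literally, $k^*=[(\sqrt{\Delta}-\beta)(v-1)-2k]/(2\sqrt{\Delta})$ is the multiplicity of the \emph{positive} eigenvalue $p^m-p$, which evaluates to $(p^m-p^{m-1}+1)(p^{2m}-1)$, whereas the stated dual size $(p^{2m-1}-p^m+p^{m-1})(p^m-1)$ is the multiplicity of the \emph{negative} eigenvalue $-(p^{m+1}-p^m+p)$ (for $p=3$, $m=2$ these are $560$ versus $168$). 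So to land on the claimed Denniston $r=m-1$ parameters one must take $\mathcal{D}^*$ to be the set of characters attaining the negative root; your two proposed consistency checks (the identity $(k^*)^2=k^*+\lambda^*k^*+\mu^*(v-k^*-1)$ and the match against the Denniston tuple) would detect this, and once the correct root is fixed the computation goes through exactly as you describe.
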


\section{Future directions}
\label{futuredirections}

In this paper we have expanded the groups containing PDSs with the Denniston parameters to include odd prime powers. This opens several possible areas for further exploration.

\begin{itemize}
\item In elementary abelian $p$-groups of order $p^{3m}$ ($p$ an odd prime), construct PDSs with Denniston parameter sets for $2 \leq r \leq m-2$.
\item Do there exist PDSs with Denniston parameters in non-elementary abelian $p$-groups when $p$ is an odd prime?  As indicated in the Introduction, constructions exist in the $p=2$ case.
\item If there are new PDSs, examine the graphs to see whether they are isomorphic to known graphs with those parameters.
\item Can we find Denniston-like PDSs in groups that are not prime powers?
\item What other conclusions can we draw about unions of cyclotomic classes?
\end{itemize}

Since our work was announced, we have become aware that an equivalent result has simultaneously been proved by de Winter in the context of projective two-weight sets, using different (geometric) techniques \cite{deW}.  It is noted in \cite{deW} that a related coding theory result in the context of two-weight codes is contained in a paper by Bierbrauer and Edel from 1997 \cite{BieEde}, whose significance has not been hitherto recognised by the community.

We further note that the first open question in our list above has very recently been resolved by Bao et al in \cite{BaoXiaZha}, using cyclotomic methods.

\section{Acknowledgements}
We thank Ken Smith for assistance with computation.  The research of Sophie Huczynska was supported by EPSRC grant EP/X021157/1.  Laura Johnson's research visit to the University of Richmond was funded by LMS Early Career Research Travel Grant ECR-2223-10. John Polhill is grateful for the support from the Gaines Chair at the University of Richmond.  We thank the anonymous referees for their helpful comments.

\end{document}